\DeclareFontFamily{U}{mathx}{\hyphenchar\font45}
\DeclareFontShape{U}{mathx}{m}{n}{
      <5> <6> <7> <8> <9> <10> gen * mathx
      <10.95> mathx10 <12> <14.4> <17.28> <20.74> <24.88> mathx12
      }{}
\DeclareSymbolFont{mathx}{U}{mathx}{m}{n}
\DeclareMathSymbol{\temp}{\mathbin}{mathx}{'341}
\newtheorem{thm}{Theorem}[section]
\newtheorem{prop}[thm]{Proposition}
\newtheorem{asum}[thm]{Assumption}
\newtheorem{defn}{Definition}[section]
\newtheorem{rem}{Remark}
\providecommand{\II}{\mathbb I} 
\providecommand{\PP}{\mathbb P}
\providecommand{\RR}{\mathbb R}
\providecommand{\PP}{\mathbb P}
\providecommand{\RR}{\mathbb R}
\newcommand{\mc}[1]{\mathcal{#1}}
\providecommand{\E}{\mathcal E}
\providecommand{\A}{\mathcal A}
\providecommand{\B}{\mathcal B}
\providecommand{\C}{\mathcal C}
\providecommand{\D}{\mathcal D}
\providecommand{\G}{\mathcal G}
\providecommand{\Hset}{\mathcal H}
\providecommand{\I}{\mathcal I}
\providecommand{\M}{\mathcal M}
\providecommand{\Sset}{\mathcal S}
\providecommand{\Pset}{\mathcal P}
\providecommand{\W}{\mathcal W} 
\providecommand{\X}{\mathcal X}
\providecommand{\Z}{\mathcal Z}
\providecommand{\U}{\mathcal U}
\newcommand{\Mn}[1]{\begin{pmatrix}#1\end{pmatrix}} 
\newcommand{\Mo}[1]{\begin{matrix}#1\end{matrix}} 
\title{\LARGE \bf
Tube-based Robust Model Predictive Control for a Distributed Parameter System Modeled as a Polytopic LPV \\(extended version)}
\author{Joe Ismail\textsuperscript{1, \dag} and Steven Liu\textsuperscript{1}%
\thanks{\textsuperscript{1} Department of Electrical and Computer Engineering, Technical University of Kaiserslautern, Kaiserslautern 67663, Germany.}%
\thanks{\textsuperscript{\dag} Correspondence e-mail,~\texttt{ismail@eit.uni-kl.de}}}
\begin{document}
\maketitle
\thispagestyle{empty}
\pagestyle{empty}

\begin{abstract}
Distributed parameter systems (DPS) are formulated as partial differential equations (PDE). Especially, under time-varying boundary conditions, PDE introduce force coupling. In the case of the flexible stacker crane (STC), nonlinear coupling is introduced. Accordingly, online trajectory planning and tracking can be addressed using a nonlinear model predictive control (NMPC). However, due to the high computational demands of a NMPC, this paper discusses a possibility of embedding nonlinearities inside a linear parameter varying (LPV) system and thus make a use of a numerically low-demanding linear MPC. The resulting mismatches are treated as parametric and additive uncertainties in the context of robust tube-based MPC (TMPC). For the proposed approach, most of the computations are carried out offline. Only a simple convex quadratic program (QP) is conducted online. Additionally a soft-constrained extension was briefly proposed. Simulation results are used to illustrate the good performance, closed-loop stability and recursive feasibility of the proposed approach despite uncertainties. 
\end{abstract}


\section{INTRODUCTION}
Stacker cranes (STC) are widely used in container terminals and large automated warehouses to execute fast and accurate positioning maneuvers for transporting purposes. Due to the flexible and slender form of the mast, undesired vibrations arise. These vibrations increase the material fatigue and reduce the productivity as well as the positioning accuracy. As a counteraction to avert such problems, many control strategies have been presented previously. 
Due to the flexible structure and moving load, STC are distributed parameter systems (DPS), which belong to the class of partial differential equations (PDE) with time-varying boundary conditions. These conditions raise nonlinearities in the lumping approaches needed to obtain ordinary differential equations (ODE) \cite{Is19Model}. For a wide operating range of the STC, model nonlinearities become very distinct, which must be explicitly addressed. 
The control techniques studied so far can be classified as open or closed-loop control techniques for either feedforward or feedback. 
Mostly the aim of open-loop techniques is the proper shaping of the reference trajectories to greatly reduce the motion-induced vibrations, as the widely used input-shaping approach, e.g. \cite{zhang2015dynamic}. Open-loop techniques include also flatness-based motion planning or inversion-based control approaches, like in \cite{zimmert2010active}, which are widely used as well. In contrast, closed-loop techniques make mostly a use of the passive nature of cranes and deploy energy-shaping strategies to decrease the actual energy and thus the vibrations, like in \cite{Staudecker08}. A more advanced control technique, is the model predictive control (MPC), which is however often used as a feedback control, as in \cite{galkina2018flatness}. In this, MPC is applied for tracking an offline generated flatness-based reference trajectories. However, MPC can combine both open and closed-loop techniques. This is advantageous, particularly in view of the steadily changing open-loop vibrational behavior of the STC. Due to the variable lift position, the open-loop frequency response changes steadily. This is a key difference of the model used here compared to the widely used modeling approaches of the STC, where the vibrations are modeled with a fixed open-loop frequency response \cite{Is19Model}. Based on this model, \cite{Is19Control} introduced an online feedforward and feedback scheme for active vibration damping control using a soft-constrained nonlinear MPC (NMPC), which is technically challenging due to the computational demand.  
To mitigate the computational demand, this paper discusses a possibility of representing nonlinearities by an LPV embedding, and thus treating the resulting mismatches (i.e. lumping, linearization) as uncertainties. Hence, nominal trajectories are computed online based on a simplified nominal linear model using a simple convex quadratic program (QP). An offline designed robust tube enforces the actual trajectories to track the nominal ones and at the same time guarantee the constraint satisfaction, asymptotic stability and recursive feasibility. The performance is benchmarked with a nonlinear MPC. TMPC is widely studied, e.g. in \cite{langson2004robust, rawlings2009model, kouvaritakis2016model}. Also the idea of embedding nonlinearities inside an LPV system was presented before in \cite{chisci2003gain, cisneros2016efficient}. \cite{chisci2003gain} uses a discrete scheduling set, resulting a collection of uncertain linear models. In \cite{cisneros2016efficient}, knowledge of the scheduling map was exploited already in the prediction stage. Based on an initial guess of the future scheduling trajectory, a simple linear time-varying (LTV) MPC problem was solved. \cite{hanema2017stabilizing} developed a nonlinear MPC based on LPV embeddings, which integrates the explicit use of a scheduling map from \cite{cisneros2016efficient} into a tube-based LPV MPC formulation, which is very close to our scheme. However, the tube parametrization considered in \cite{hanema2017stabilizing} is a so-called homothetic tube, which is different from our parametrization. A sequence of tubes are computed and then one tube and a terminal set are parametrized, which is in fact slightly different from other approaches. Additionally, the soft-constrained MPC approach is considered, which is also different.           
%
%
%
%
%
%
%
%
%
\\
\\
\textit{Contribution:}
The main contribution of this paper is the introduction of a simple online feedforward and feedback scheme for active vibration damping control using a TMPC. This scheme reduces the online computations to a convex quadratic program (QP) with a sparse structure. At the same time TMPC deals with nonlinearities and model lumping mismatches. As a result, actual trajectories are forced to track the nominal trajectories. To penalize any close operation beside the resonance frequencies, resonance frequencies are considered in the frame of soft-constrained MPC. Additionally, this paper gives a brief discussion on stability and feasibility.  
%
Treating STC as an LPV model is not new. \cite{hajdu2016mast} already used LPV modeling, which is however different from ours due the used modeling technique. Additionally, the obtained system was used for closed-loop control. In contrast, combining both open and closed-loop based on an LPV system is new and dealing with the accompanied uncertainties using TMPC is also new according to the best knowledge of the authors. Additionally, to our surprise the parametrization of tube cross-section turned out to be new in context of TMPC. 
%
%
\\ 
This paper is structured as followed: Section \ref{sec:Preliminaries} gives an introduction to the dynamic formulation of a STC. Section \ref{sec:ProblemFormulation} introduces the problem formulation using an LPV formulation and also introduces the formulation of the TMPC problem. Section \ref{sec:Analysis} discusses some theoretical properties. Section \ref{sec:MPCwithSTC} addresses the targets of the paper. Section \ref{sec:result} shows the simulation results. Finally, Section \ref{sec:conc} provide a conclusion.
%

\section{Preliminaries}
\label{sec:Preliminaries}
\subsection{Mathematical Notations}
\label{subsec:notations}
Let $\II$ denote the set of non-negative integers. For $n, m \in \II \cup {\infty}$, let $\II_{\geq 0}$ and $\II_{n:m}$ denotes the sets $\{ r \in \II: n \leq r \leq m\}$, respectively. Similarly, $\RR_{\geq 0}$ denote the non-negative real numbers, $\RR^{n}$ are a real valued $n$-vectores, $\PP_N(x, r)$ is an MPC optimization problem with horizon $N$, initial state $x$ and reference $r$. $||.||$ and $\| . \|_\infty$ are the $l_2$ and the $l_\infty$ norms. A set $\A \subset \RR^n$ is called a proper C-set, or PC-set, if it is convex, compact (i.e., closed and bounded), and has a non-empty interior that includes the origin. The convex hull of a set $\A \subset \RR^n$ is denoted by $Co\{\A\}$. A convex $\Hset$-polytope denotes a bounded intersection of $q$ closed half-spaces $\Pset=\{x \in \RR^n: Cx\leq d, C\in \RR^{q \times n}, d \in \RR^q\}$. For sets $\A, \B$ and a scalar $\alpha \in \RR$ let $\alpha\A=\{\alpha \A:\alpha\in\A\}$. The Minkowski sum of two sets is defined as~$\A\oplus\B=\{z:z=a+b,a\in\A,~b\in\B\}$ and the Pontryagin difference between two sets as~$\A\ominus\B=\{z:z+b\in\A ,~\forall b\in\B\}$. 
\subsection{Nonlinear Dynamic}
\label{subsec:DynamicSetup}
Fig. \ref{fig:CTS} depicts the model of the STC. Both the carriage and the lift positions are controllable in terms of actuation. Carriage, lift and the tip mass are modeled as lumped masses $m_c$, $m_l$ and $m_t$ respectively. $(x_t,L), (x_l,y_l)$ and $(x_c,0)$ denote the coordinate of the tip, lift and carriage respectively. $EI$ represents the flexural rigidity. The cross-sectional area of the beam is denoted as $A$ and density as $\rho$. The actuator forces are axial forces and denoted as $F_1$ and $F_2$.
%
\begin{figure}[htbp]
\centering
\includegraphics[scale=0.48]{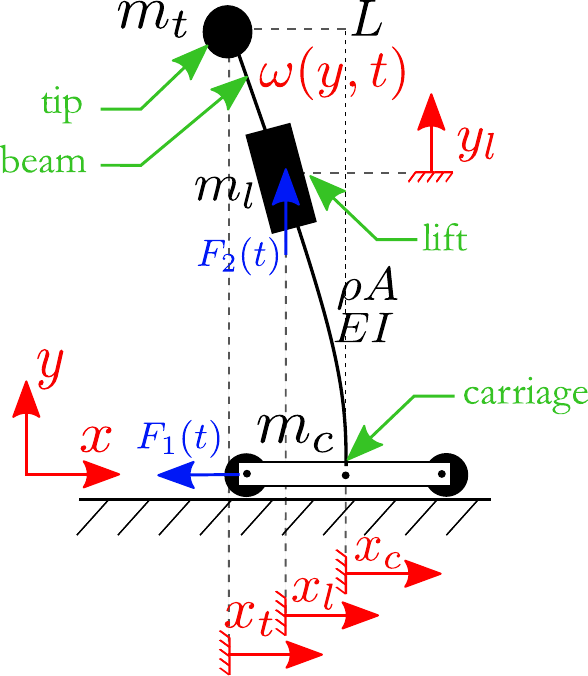}
\vspace{-3mm}
\caption {flexible stacker crane}
\label{fig:CTS}
\end{figure}
For the beam of length $L$, the spatial variable is given as $y \in \mathbb R_{0:L}$. $\omega(y,t)=\omega_y$ denote the absolute axial deflection with time-dependency $t$.
Moving lift position and load changes induce a variable open-loop frequency response. To capture these changes a model of two Euler-Bernoulli beams with time-varying boundary conditions is considered in \cite{Is19Model} and has been adapted here. In the lumping approach, two modes have been considered in a trade-off between fidelity (approximation accuracy) and computational cost. Therefore, the axial deflection $\omega(y,t)$ is thus assumed to have two degree of freedom denoted as $\phi(t) \in \mathbb R^{2}$. Together with the axial displacements, the resulting system is a fourth-order system with the following generalized coordinates,
\begin{align}
q(t)=\begin{pmatrix}x_c(t) & y_l(t) & \phi_1(t)  & \phi_2(t)\end{pmatrix}^T. \label{eq:states}
\end{align} 
Accordingly, the equations of motion is rearranged as $\ddot{q} = f(q,\dot{q},F_{ext})$, then rewritten in a lighter and familiar notation,
\begin{align}
\label{eq:ODEMS}
\dot{x} &= f(x,u),~~\text{with}~~x(0)=x_0,
\end{align}
where $f:\mathbb R^n\times \mathbb R^m \mapsto \mathbb R^n$, such that $n, m$ are the state and input order respectively. 
Details are given in \cite{Is19Model}, which are shortened here due to the marginal contribution to the sequel.
\subsection{Polytopic LPV Representation}
\label{subsec:LPVSetup}
The main idea of the LPV technique is to embed the system nonlinearities given in \eqref{eq:ODEMS} in time-varying parameters inside an LPV representation. As a result, the nonlinear dynamic is replaced by an affine composition of linear systems. In an LPV system, the dynamical mapping between inputs and outputs is linear, while the mapping itself depends on a time-varying scheduling parameter $p$. A commonly used method to obtain an LPV description of a nonlinear model \eqref{eq:ODEMS} is the classical Jacobian linearization around a series of equilibrium points or reference trajectory $(x_r, u_r)$ as,
\begin{align}
	\dot \delta_x = \underbrace{\frac{\partial f~}{\partial x^T}\bigg\rvert_{x_r,u_r}}_A \delta_x + \underbrace{\frac{\partial f~}{\partial u^T}\bigg\rvert_{x_r,u_r}}_B \delta_u + \underbrace{f(x_r,u_r)-\dot x_r}_{R(x,u)} \label{eq:linearz}
\end{align}
with $\delta_x=x-x_r,~\delta_u=u-u_r$. The affine term $R$ is non-zero and gives the linearization error. The rearranged model in \eqref{eq:ODEMS} contains state $x \in \RR^n$, and input $u \in \RR^m$ defined as, $x =\Mn{ x_c & \dot x_c & y_l & \dot y_l & \phi_1 & \dot \phi_1 & \phi_2 & \dot \phi_2}^T \in \RR^8$, $u=\Mn{F_1 & F_2}^T \in \RR^2$. The reference trajectories are stabilizing and feasible trajectories of the STC, which are computed offline as, $x_r=\{x: x=\dot x=\{0\} \setminus y_l \neq \{0\} \}$, $u_e=\{u: u_1=\{0\}, u_2 \neq \{0\} \}$. 
With increasing lift- height $y_l$ and mass $m_l$ nonlinearities becomes mores distinct \cite{Is19Model}. To capture these nonlinearities, feasible reference trajectories are obtained along the whole range of lift- height and mass changes, both within the given boundaries. This induce an LPV system description, where the time-variant elements are considered as a parametric uncertainty. 
$\breve{x}=[m_l, y_l]^T \in \RR^d$ denote the parametric uncertainty, which is known and consist of both a parameter and a state, with $d$ as the order of the parametric uncertainty. 
Both are considered to be limited over a $\Hset$-polytopic set,  
\begin{align}
\breve{\X} = \{\breve{x} \in \RR^{d}: \A_m m_l + \A_y y_l \leq b_l \},
\end{align}
with $\breve{x} \in \breve{\X} \subseteq \mathbb R^d$ to be compact, $\A_m =(1~~0~-1~~0)^T, \A_y =(0~~1~~0~-1)^T$ and upper and lower boundary vector $b_l=(\overline{\breve{x}}~~\underline{\breve{x}})^T$. A scheduling parameter is now defined as $p$, which is state-dependent through the scheduling map $T:\breve{\X} \rightarrow \Pset$. The set $\Pset$ is defined as the scheduling set.
\begin{defn}
Due to the state-dependency of scheduling parameter $p$, the resulting LPV system is regarded to be a quasi-LPV. Additionally, an LPV system is polytopic, when matrices can be represented as $A(p_k), B(p_k)$, with an affine dependency to $p_k$, which varies on a known scheduling set $\Pset$. \label{asum:QLPV} 
\end{defn}
\noindent Additionally, increasing lift height $y_l$ and mass $m_l$ induce stronger linearization errors, which are however limited due to the bounded polytope $\breve{\X}$. For this, the linearization errors given in \eqref{eq:linearz} as $R(x,u)$, is considered here as an additive uncertainty $w_k$. For discretization, the Euler method was used, with a sampling time $T_s$. Since both $y_l$ and $m_l$ are known, i.e. measurable, a discrete polytopic quasi-LPV model can be obtained, with $x_{k+1} = A(p_k)~x_k + B(p_k)~u_k +  w_k$ and state-dependent scheduling map $p_k = T(x_k)$, written as,
%
%
%
%
\begin{align}
	x_{k+1} &= A_{p_k}~x_k + B_{p_k}~u_k +  w_k,~x(0)=x_0, \label{eq:LPVsys}\\
	p_k &= T(x_k),  \label{eq:schedulingMap}
\end{align}
with, 
\vspace{-6mm}
\begin{align*}
A_{p_k}=
\sbox0{$
\textbf{I}_{(2\times2)} \otimes \Mo{1 & T_s \\ 0 & 1}
$}
\sbox1{$
\Mo{ p_{2(4\times4)} } 
$}
\Mn{ 
\begin{array}{c|c}
\usebox{0}&\begin{array}{c} p_{1(4\times4)}  \\  \textbf{0}_{(2\times4)} \end{array}\\
\hline
\textbf{0}_{(4\times4)} &\usebox{1}
\end{array}
},
\end{align*}
\begin{align*}
B_{p_k}=
\Mn{ 
	    p_{3(2\times1)} & \textbf{0}_{(2\times1)} \\
			\textbf{0}_{(2\times1)} & p_{4(2\times4)} \\
			p_{5(4\times1)} & \textbf{0}_{(4\times1)} 
} 
,~~~
u=
\Mn{F_1 \\ F_2}. 
\end{align*}
The scheduling map $T$ consists of a set of nonlinear terms given as, $ T(x_k)= \{\Xi_{2\times4},  \Gamma_{4\times4},  \Pi_{2\times 1}, \Upsilon_{2\times 1}, \Lambda_{4\times 1}\}$. 
A series of scheduling parameters $p_k=\{p_i\}_i^5$ are given here, which are affine to $A_{p_k}$ and $B_{p_k}$. Matrix $A_{p_k}:\RR^{d}\rightarrow \RR^{n\times n}$, $B_{p_k}:\RR^{d}\rightarrow  \RR^{n \times m}$ are known to lie in the scheduling set $\Pset$. $k \in \II_{\geq 0}$ denote the discrete time index, $x_k:\II\rightarrow\X\subseteq\RR^n$ is the state variable, $u_k:\II\rightarrow\U\subseteq\RR^m$ is the control input, $p_k:\II\rightarrow \Pset \subseteq\RR^d$ is the scheduling variable and $w_k:\II \rightarrow\W\subseteq\RR^n$ is the additive uncertainty. $\X,~\U$, $\Pset$ and $\W$ are state-, input-, scheduling and uncertainty sets, respectively, which are assumed to be PC-polytopic sets and known exactly. The matrices $A_{p_k}, B_{p_k}$ are considered as,
\begin{align}
\Mn{A_{p_k} \\ B_{p_k} }
\in  
\underbrace{Co 
\bigg\{\ \hspace{-1mm}
\Mn{ A_j \\ B_j }\bigg\}}_{\mathcal D}
:= \sum_{j=1}^{M} \sigma^j(p_k) 
\Mn{A_j \\ B_j}. \label{eq:affinefunc}
\end{align}
\begin{asum}
The matrix pairs of vertices $(A_j,~B_j)\in \RR^{n\times n} \times \RR^{n\times m},~j \in \II_{1:M}$ are stabilizable.
\label{asum:stabilize}
\end{asum}
\noindent with $j \in \II_{1:M}$, denote $0 \leq \sigma^j(p_k) \leq 1$ as the weight of each pairs of vertices and let $\sum_{j=1}^{M} \sigma^j(p_k)=1$. The vector of parameters evolves inside a polytope represented by the convex hull $\mathcal D$ with $M$ as the number of vertices. The disturbance constraint set is represented by $\W=Co\{w_j, j \in \II_{1:M}\}$. In \eqref{eq:affinefunc}, it is obvious that, the system matrices are real affine function of $p_k$. Together with $p_k$ as in Definition \ref{asum:QLPV}, such an LPV system is referred as polytopic quasi-LPV system. The nonlinear elements in \eqref{eq:schedulingMap} are not listed here due to the shortage of space. However more details with the related derivations are given as a MuPAD Notebook online on \cite{Exchange19}. 
\begin{rem}
Recall \eqref{eq:linearz} and \eqref{eq:LPVsys}, the additive disturbances acting on the linearised model represents the linearization error, can be obtained using the Mean Value Theorem.
\end{rem}

\section{Problem Formulation}
\label{sec:ProblemFormulation}
Now consider the uncertain discrete-time LPV system, given in \eqref{eq:LPVsys} and rewrite it as,
\begin{align}
	x_{k+1} = (A_0+\Delta_A)~x_k + (B_0+\Delta_B)~u_k +  w_k,
	\label{eq:LPVsysNeu}
\end{align}
with $x(0)=x_0$. Further, $A_0$ and $B_0$ denote the nominal system matrices, by averaging the parametric uncertainties. Additionally, $\Delta_A$ and $\Delta_B$ denote the worst case uncertainities ($p=\overline{p}$), such that $(\Delta_A,~\Delta_B) \in \mathcal D$. In this paper more general hard constraints, $\M \subseteq \X \times \U$, on the state-input space given in polytopic representation are imposed,
\begin{align}
(x_k,u_k) \in \M:= \C_x x_k + \D_u u_k \leq \E ,~k \in \II_{0:N-1}, \label{eq:MixedConstraints}
\end{align}
with $\C_x \in \RR^{n_e\times n}, \D_u \in \RR^{n_e\times m}$ and $\E\in \RR^{n_e}$ to be the matrix element of the polytope in state-input space in $\Hset$-polytope representation, i.e. $(x_k,~u_k)\in(\X,~\U)$. $N$ is a given integer i.e. horizon. Additionally, a polytopic terminal constraint set is introduced as,
\begin{align}
\X_f:= \G_x x_k \leq \mc F,~k \in \II_{N:\infty}, \label{eq:TerminalConstraints}
\end{align}
with $\G_x \in \RR^{n_f\times n}, \mc F \in \RR^{n_f}$, which specifies the hyperplanes bounding of $x_N \in \X_f \subset \X$.
\subsection{State Decomposition}
\label{subsec:StateDecomposition}
The uncertain system is firstly decomposed into a nominal and the error of the uncertain system, denoted as $z$ and $e=x-z$, respectively,
\begin{align}
	z_{k+1} &= A_0~z_k + B_0~v_k, \label{eq:decompNom} \\
	e_{k+1} &= A_c~e_k + A_d~e_k+ \Delta_A z_k + \Delta_B v_k + w_k, \label{eq:decompError}
\end{align}
with $A_c=A_0+B_0~K$ as the closed-loop system matrix, which is assumed to be Hurwitz matrix due to Assumption \ref{asum:stabilize} and $A_d=\Delta_A+\Delta_B~K$ as a uncertain system matrix. $K$ is a pre-stabilizing state-feedback designed for the nominal system \eqref{eq:decompNom} and $v_k$ denote the nominal input. Since prediction depend on the input, which contains the uncertain state, the pre-stabilizing $K$ is necessary to obtain stabilized predictions and avoid infeasibility.
\subsection{Disturbance Invariant Tube}
\label{subsec:Invariantmpc}
Due to the state- and input- dependency of the error $e$ in \eqref{eq:decompError}, it is difficult to obtain an analytic solution of the difference equation \eqref{eq:decompError}, which can be computed offline. Such a solution written as a set-valued function describes the disturbance set. \cite{rawlings2009model} proposed an additional term to the nominal system dynamic to eliminate $A_d~e_k+ \Delta_A z_k + \Delta_B v_k$ as $w_e=\Delta_A~x_k+ \Delta_B~u_k$. Since $(\Delta_A, \Delta_B)$ are known and present the worst case uncertainties, conservatism is added with the new additive term. $w_e$ lies in the set $\W_e$ defined as, $\W_e:=\Delta_A~\X \oplus \Delta_B K~\U$. Due to the affine mapping with $\X$ and $\U$ the tube cross-section increases with increasing admissible sets, which make this approach very conservative. Another way, was introduced by \cite{Fleming13}, which computes online the parametrization of tube cross-sections. As a result, the invariant tube cross-section become time-varying. However, for this the optimization deals with an additional decision variable. As a trade-off between conservatism \cite{rawlings2009model} and computational effort \cite{Fleming13} a simple method is introduced here, which is computed offline.     
\begin{defn}
(RPI set): A set $\Omega \subset \RR^n$ is robust positive invariant set, if $Ax+w \in \Omega$ for all $(x,w)\in (\X,\W)$ if and only of $A\Omega\oplus\W\subseteq\Omega$.
\end{defn}
\begin{asum} \label{asum:LPVToLTI}
Over the matrix pairs $(A_j, B_j), j \in \II_{1:M}$ a known and constant scheduling parameter is deployed, which inherent the multiplicative terms in each pairs. 
\end{asum}
\noindent Due to assumption \ref{asum:LPVToLTI}, \eqref{eq:LPVsysNeu} is considered to be a set of discrete linear models of matrix pairs $(A_j,~B_j)$, however with only implicit multiplicative terms. As a result, the following applies $e_{k+1} = A_c e_k + w_k$. For each pair $(A_j, B_j)$ a stabilizing feedback $K_j, j \in \II_{1:M} $ is designed. Each $K_j$ is the optimal controller gain of the discrete Ricatti equation of each matrix pair. For each $K$ a resulting uncertainty set, given as $\Sset_k^i$ is defined $\forall i \in \II_{\geq 0}: i\rightarrow \infty$ as,
\begin{align}
\Sset_k^i = \left\{\bigoplus_{j=0}^{i-1}A_c^j\mc W\right\}, 
\end{align}
with $e_k \in \Sset_k^{\infty}$, which is known to be the minimal robust positive invariant set (mRPI). However, due to computational difficulties an inner and outer approximation of a mRPI set is obtained. For this, and as its given in \cite{rakovic2005invariant}, a number of linear program problems (LP) is solved to obtain a finite integer $i$ and a scalar $\alpha \in [0,1)$, such that $A_c^i \mc W \subseteq \alpha \mc W$ and $K A_c^i \mc W \subseteq \alpha K \mc W$. As a result, for each matrix pairs an approximation of $\Sset_k^{\infty}$ is given as $\Z_{apr}(j) = (1-\alpha(i))^{-1}\Sset_k^i$ and a set of $\Z_{apr}(j)$ is obtained. For simplicity a convex hull of the union of each $\Z_{apr}(j)$ is considered here,
\begin{align}
\Z &= Co \bigg\{\bigcup_{j=1}^M \Z_{apr}(j)\bigg\}. 
\label{eq:RobustInvariantSet}
\end{align}
$\Z$ is defined as a RPI set and is PC-set. Similar computing is known for solving linear matrix inequalities for stabilizing an LPV system, but not used for TMPC according to our knowledge. Sure, it introduce also conservatism, which is however much lesser than that of \cite{rawlings2009model}. A sequence of the RPI sets centered along the nominal trajectory construct simply the cross-section of the disturbance invariant tube, denoted as set sequence $\{X_k\} := \{z_k\} \oplus \Z $ and an associated (time-varying piecewise affine) nominal policy satisfying $\{U_k\} := \{v_k\} \oplus K Z$. All possible state trajectory realizations due to uncertainties are contained in this tube $(x_k, u_k) \in (\{X_k\},\{U_k\})$.
After computing $i, \alpha$ and $\Z$, running and terminal constraints are tightened as,
\begin{subequations}
\begin{align}
\{z_k\}  \in \bar{\X} &:= \mc X \ominus \Z,& \hspace{-6mm}\forall k \in \II_{0:N-1}, \\
\{v_k\}  \in \bar{\U} &:=\mc U \ominus K \Z,& \hspace{-6mm}\forall k \in \II_{0:N-1}, \\
z_N  \in \bar{\X_f} &:= \mc X_f \ominus \Z \subseteq \bar{\X}.&
\end{align}
\end{subequations}
\subsection{Nominal MPC}
\label{subsec:nominalmpc}
Define a nominal, finite-horizon open-loop optimal problem with the typical paradigm cost function,
%
\begin{subequations}
\label{eq:OPC1}
\begin{align} 
\mathcal{J}^0_N \overset{\Delta}{=} \| z_N\|^2_P &~+\sum_{k=0}^{N-1} \| z_k\|^2_Q + \|  v_k\|^2_R \nonumber \\
\PP_N(z_0, .):  \mathcal{J}^*_N =&~ \underset{\bm{v}, \bm{z}}{\text{min}}~ \mathcal{J}^0_N(\bm{v},\bm{z}) \label{eq:OPC2} \\
\text{s.t.} &~~z(0)=z_0,  \label{eq:OPC3} \\
&~~z_{k+1}=A_0~z_k+B_0~v_k, \label{eq:OPC4}  \\
&~~ \bar{\M}:=\C_x x_k + \D_u u_k \leq \bar{\E}, \label{eq:OCP5}  \\
&~~ \bar{\X_f}:= \G_x x_k \leq \bar{\mc F}, \label{eq:OCP6} \\
&~~(\bm{z}, \bm{v})\overset{\Delta}{=} (z_k, z_{N}, v_k) \in \bar{\M}
\end{align}
\end{subequations}
$\forall k \in \mathbb I_{0:N-1}$, where $\bm{z}:\mathbb R^n \mapsto \mathbb R^{N\times n}$ and $\bm{v}:\mathbb R^n \mapsto \mathbb R^{N\times m}$ are the nominal state and control sequences, with $\bar{\M} \subseteq \bar{\X} \times \bar{\U}$. Note that $Q\in \mathbb R^{n\times n}, Q\succeq 0 $, $R\in \mathbb R^{m\times m}$ and $R\succ 0 $ for weighting the stage costs. Additionally, assume that $P\in \mathbb R^{n\times n}, P\succeq 0 $ for weighting the terminal cost. Additionally, $\bar{\M}$ denote the tightened admissible state-input space, with $\bar{\E}$ and $\bar{\mc F}$ as the tightening boundaries in polytopic representation. For the nominal system \eqref{eq:OPC4} $K$ is the nominal feedback, which is the solution of the Riccati equation. Together with the help of the nominal state and control sequence a feedback policy i.e. closed-loop is obtained as,
\begin{align}
u_k=
\begin{cases}
K(x_k-z_k)+v_k, &\forall  k \in \mathbb I_{0:N-1}, \\
K~x_k, &\forall  k \in \mathbb I_{\geq N}.
\end{cases} \label{eq:feedbackpolicy}
\end{align}
This feedback \eqref{eq:feedbackpolicy} denote the dual-mode paradigm, i.e. Mode 1 ($\forall  k \in \mathbb I_{0:N-1}$) and Mode 2 ($\forall  k \in \mathbb I_{\geq N}$). It is worth to mention that \eqref{eq:OPC1} is written as sparse structured QP, which is an additional feature for efficient online computations.  
\subsection{Soft Constrainted MPC}
\label{subsec:Softmpc}
In \cite{Is19Control} an online feedforward and feedback scheme for active vibration damping control using a soft-constrained MPC was introduced and adapted here. For trajectory planning  and due to the variable lift position as well as the load change, different open-loop frequency responses of the beam can be obtained for each prediction. To avoid the resonance frequencies, these have been analyzed and modeled as a bounded set $\B$. For each prediction over a horizon a frequency prediction is carried out online. Any consideration of resonance frequencies as hard constraints shrinks the operational domain, especially for mechanical systems, with mostly low resonance frequencies. Therefore, this constraint is relaxed in the frame of soft-constrained MPC. Any frequency prediction obtaining a violation of $\B$ is penalized in the cost function by introduction of slack variables. As a result, feasibility is preserved and a resonance free operation can be guaranteed. Consequently, the finite-horizon open-loop optimal problem in \eqref{eq:OPC1} is extended with the inequalities,
\begin{align}
\E(x_k,u_k,\rho_k)-\B^p_{-} &\geq - s_k, \\
\E(x_k,u_k,\rho_k)-\B_{+}^p &\leq s_k,
\end{align}
where $\E(x_k,u_k,\rho_k)$ states the solution of the frequency prediction using system model. $s_k$ are the slack variables and $\B^p_{-}, \B^p_{+}$ are the lower and upper boundaries of $\B$ respectively. Additionally, required is $(s_k, s_N) \geq 0$ and the slack variables are penalized in the cost function. Due to the fact, that $\B$ is soft-constrained, and thus nominal trajectories are still feasible, no additional actions are required for constraint tightening and to guarantee the recursive feasibility.  

\section{Discussion on stability and feasibility}
\label{sec:Analysis}
Stability and feasibility for TMPC are widely studied in literature, e.g. in \cite{rawlings2009model, kouvaritakis2016model}. The key element of asymptotic stability approaches is the introduction of a terminal cost function \eqref{eq:OPC2} to guarantee the monotonic non-increasing property of the objective function in closed-loop. The terminal set is designed in \eqref{eq:OCP6} to obtain recursive feasibility. Ensuring feasibility and stability is necessary to mimic the infinite horizon MPC.
\subsection{Maximal Control Invariant Sets}
\label{subsec:CIS} 
\begin{defn} 
\label{def:CPI}
(CPI set): A set $\Omega \subset \RR^n$ is controlled positive invariant set for a given dynamics and constraints if, for all $x_k\in \Omega$, there exists input $u_k \in \RR^m$ such that constraints holds and $x_{k+1} \in \Omega$. Furthermore, $\Omega$ is maximal CPI (MCPI) set, if it is CPI and contains all other CPI sets.
\end{defn} 
\noindent Feasible sets $\mc F_{N}$ are defined as a set of initial condition that steers to a terminal set over a horizon $N$. Size of $\mc F_{N}$ grow with increasing $N$, until a maximal feasible set is reached defined as $\mc F_{\infty}=\bigcup_{N=1}^\infty \mc F_{N}$. These sets are, however, not dependent on the choice of terminal set and thus are identical to the maximal CPI set or infinite time reachability set \cite{kouvaritakis2016model}. 
%
%
%
%
%
%
%
%
%
%
%
%
%
%
%
%
%
\subsection{Maximal Positive Invariant Set}
\label{subsec:TerminalSet}
For Mode 2, the terminal set is computed as a maximal positive invariant (MPI) set with robust tightening to be MRPI set. Maximality is considered as the union of all RPI sets under the dynamic and constraints. For LPV systems, MRPI relay mostly on the parametric disturbed system $x_{k+1} = (A_c + A_d~K)x_k$, which induce conservatism. Here, since the time-varying scheduling parameter $p$ follows a known sequence the terminal set is computed for the last matrix pairs $(A_M, B_M)$, with $K_M$ to be the optimal controller gain of the discrete Riccati equation of the closed-loop, $x_{k+1} = (A_M+B_M K_M)x_k$. In contrast, classical TMPC use mostly only one $K$ for both nominal feedback and terminal set computation.   
%
%
To obtain the terminal set, the $N$-Step backward reachability is applied. Backward reachability is sometimes referred as pre-images or pre-sets. In this case the pre-sets of terminal set are computed, by defining the set of states which evolve into a target set in N-steps under given system dynamic set constraints as, 
\begin{align}
	\text{Pre}(\X_f) = \{ \G_x x_k\leq \mc F : \G_x x_{k+1}\leq \mc F \}.
	\label{eq:Pre}
\end{align}
Starting from the admissible set $\X_0=\X$ and propagating recursively as, $\X_{k}= \text{Pre}(\X_{k-1}) \cap \X,~~\forall k \in \II$.
%
%
%
%
%
A backward propagating sequence $\{\X_k\}$ keeps propagation as long as, $\X_{k+1}\subseteq \X_k$ and it terminates when $\X_{k+1}=\X_k$, with $\X_k$ to be the maximal positive invariant set $\X_f\subseteq \X$. 
\subsection{Asymptotically Stability}
\label{subsec:stability} 
STC is a passive mechanical system, which provide an analogy to Lyapunov stability theory. The total mechanical energy dissipate due to material damping and the energy decline to zero. 
\begin{prop} 
\label{prop:RAS}
(RAS): The set $\Z \times \{0\}$ is a robust asymptotic stable for the decomposition $(z_{k+1} = A_0 z_k + B_0 v_k, e_{k+1} = A_c e_k + w_k)$ in the positive invariant set $\Z \times \bar{\X}$.
\end{prop}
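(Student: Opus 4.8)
The plan is to establish robust asymptotic stability of $\Z \times \{0\}$ by treating its two factors separately and then combining them in the product metric, in which the distance of $(e_k,z_k)$ to $\Z \times \{0\}$ collapses to $\|z_k\|$. I would first handle the error factor governed by $e_{k+1}=A_c e_k + w_k$. By Assumption \ref{asum:stabilize} the matrix $A_c=A_0+B_0 K$ is Hurwitz, and the set $\Z$ constructed in \eqref{eq:RobustInvariantSet} is by design an RPI set, so that $A_c \Z \oplus \W \subseteq \Z$. Hence $e_0 \in \Z$ forces $e_k \in \Z$ for every $k$ and every admissible $w_k \in \W$; the error factor never leaves $\Z$ and its set-distance $d(e_k,\Z)$ is identically zero, which already certifies robust positive invariance of $\Z \times \bar{\X}$ in the error direction.

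Next I would treat the nominal factor $z_{k+1}=A_0 z_k + B_0 v_k$ driven by the MPC feedback $v_k$ from $\PP_N(z_k,\cdot)$, using the optimal value function $\mathcal{J}^*_N$ as a Lyapunov function in the standard dual-mode argument. Recursive feasibility follows from the usual shift construction: the tail of the optimal sequence appended with the terminal control $K z_N$ is feasible at step $k+1$ provided $\bar{\X_f}$ is positively invariant under $A_c$ and the tightened constraints $\bar{\M}, \bar{\X_f}$ hold, which keeps $z_k \in \bar{\X}$ for all $k$. The value-function descent $\mathcal{J}^*_N(z_{k+1}) - \mathcal{J}^*_N(z_k) \leq -\|z_k\|^2_Q - \|v_k\|^2_R$ then follows once the terminal ingredients satisfy the control-Lyapunov inequality $\|A_c z\|^2_P - \|z\|^2_P \leq -\|z\|^2_Q - \|K z\|^2_R$ on $\bar{\X_f}$, i.e. the Riccati/Lyapunov property defining $P$ and $K$; together with quadratic bounds $\underline{c}\,\|z\|^2 \leq \mathcal{J}^*_N(z) \leq \overline{c}\,\|z\|^2$ this yields asymptotic stability of $z=0$ for the nominal closed loop.

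Finally I would combine the pieces on the positively invariant region $\Z \times \bar{\X}$: since $e_k \in \Z$, the distance of $(e_k,z_k)$ to $\Z \times \{0\}$ equals $\|z_k\|$, so the same Lyapunov function certifies both the $\varepsilon$--$\delta$ stability and the attractivity $\|z_k\|\to 0$, uniformly over all $w_k \in \W$; returning through $x_k=z_k+e_k$ shows the true state is robustly driven into the tube cross-section $\Z$. The main obstacle I anticipate is reconciling the terminal ingredients with the nominal dynamics. The descent inequality is required for the nominal pair $(A_0,B_0)$ with gain $K$, whereas Section \ref{subsec:TerminalSet} builds the terminal MPI set from the last vertex pair $(A_M,B_M)$ with its own Riccati gain $K_M$; verifying that this set is simultaneously invariant under $A_c=A_0+B_0K$, constraint-admissible after tightening by $\Z$, and compatible with the Lyapunov decrease is the delicate step. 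A secondary subtlety is that $Q \succeq 0$ is only semidefinite, so concluding attractivity rather than mere boundedness of $z=0$ requires an additional detectability argument on $(A_0,Q^{1/2})$.
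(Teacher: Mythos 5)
Your proof follows essentially the same route as the paper's: the paper likewise splits the argument into the RPI property of $\Z$ (so that $e(0)\in\Z$ forces $e(k)\in\Z$ for all $k$ and the error factor contributes nothing to the set-distance) and asymptotic stability of the nominal closed loop, and then bounds the distance of $(e(k),z(k))$ to $\Z\times\{0\}$ by a $\mathcal{K}\mathcal{L}$ function of $|z(0)|$. The only difference is one of detail: the paper simply asserts the nominal $\mathcal{K}\mathcal{L}$ bound by citing \cite{rawlings2009model}, whereas you supply the value-function descent argument behind it and correctly flag the $K$ versus $K_M$ terminal-ingredient mismatch and the $Q\succeq 0$ detectability caveat, neither of which the paper's one-line proof addresses.
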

\begin{proof} \cite{rawlings2009model}
Define $\mathcal K\!\mathcal L$ to be a comparison function with $\beta \in \mathcal K\!\mathcal L$ and $\beta(c,t)$ to be continuous and strictly increasing with increasing $c$ (decreasing with increasing $t$). Since the origin is asymptotically stable for $z_{k+1} = A_0~z_k + B_0~v_k$, there exists a $\mathcal K\!\mathcal L$ function $\beta$, such that every solution $\phi(k;z)$ of the controlled nominal system with initial state $z_0 \in \bar{\X}$ satisfies, $|\phi(k;z)|\leq\beta(|z(0)|,k), \forall  k \in \mathbb I_{\geq 0}$. Since $e(0) \in \Z$ implies $e(k)\in \Z$ for all $k \in \mathbb I_{\geq 0}$, it follows that, $|(e(k),\phi(k;z)|_{\Z \times \{0\}} \leq |e(i)|_{\Z} + |\phi(k;z)|\leq \beta(|z(0)|,k)$. Hence, the set $\Z \times \{0\}$ is a RAS in $\Z \times \bar{\X}$.     
\end{proof}
\noindent Due to the passivity of the STC, MCPI and MPI sets depend also on a stiffness-proportional damping factor of the beam, which is artificially introduced to model the actual material damping \cite{Is19Model}. For increasing damping factors $(\beta_d = 0.01, 0.5, 1)$, Fig. \ref{fig:Sets} shows increasing MPI sets (dotted) and increasing MCPI sets (dashed). For relaxed input constraints MPI takes even the size of MCPI sets. In the sequel, the damping factor is much reduced to demonstrate the convergence properties of the TMPC. Generally, for very stiff systems one might start the TMPC trajectories even from inside an invariant set. In this case, trajectory planning is not required to determine whether, but how the trajectory converges. Also, the change of lift mass induces a time-varying MCPI sets. However, since the trajectories lies inside the tube, MCPI property is already guaranteed over the given mass changes.
\vspace{-3mm}
\begin{figure}[htbp]
\begin{center}
\includegraphics[width=0.4\textwidth]{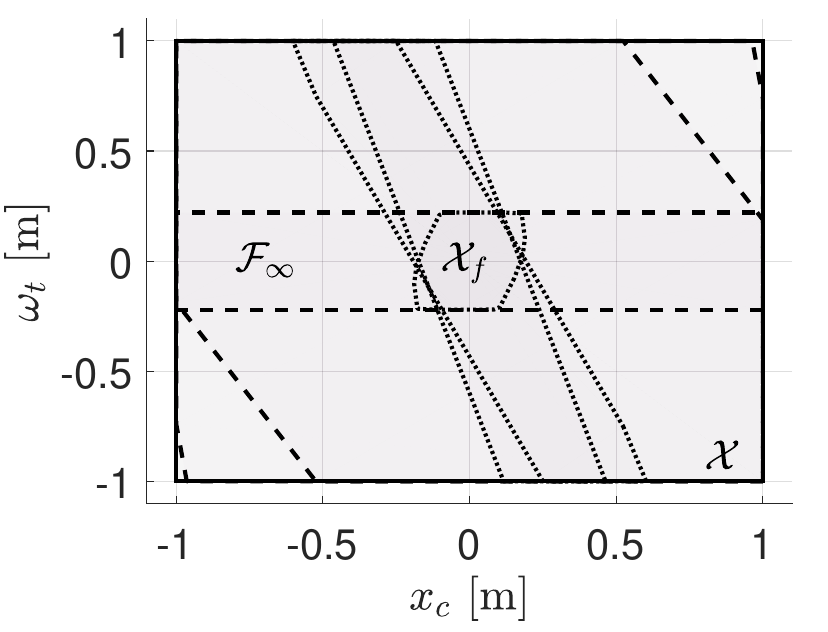}
\vspace{-3mm}
\caption{MPI sets (dotted), MCPI sets (dashed)} 
\label{fig:Sets}
\end{center}
\end{figure}

\section{Trajectory Planning and Tracking Control}
\label{sec:MPCwithSTC}
This paper deals with both, the open-loop and the closed-loop problem. For the first problem, at each sampling instant a nominal optimal control problem (OCP) is solved in context of MPC \eqref{eq:OPC1}, which aim to drive the stacker crane (STC) from a known initial state $x_0$ into the origin while minimizing an objective function. The second is a tracking problem, in particular tracking along these trajectories. The proposed TMPC deals with both problems online. The whole performance is benchmarked with a fully nonlinear MPC (NMPC).   
\vspace{-1mm}
\subsection{Nonlinear MPC (NMPC)}
\label{subsec:nmpc}
Under the same MPC conditions of \eqref{eq:OPC1}, however with the nonlinear model of \eqref{eq:ODEMS}, the NMPC problem is formulated. For this, a sequence of finite dimensional Nonlinear Programming (NLP) is solved repeatability online. NLP is obtained using the direct multiple shooting method and solved using the Sequential Quadratic Programming (SQP) method. This optimization is repeated until the terminal set is reached with stabilizing guarantee, i.e. Mode 2.
\section{NUMERICAL CASE STUDY: Stacker Crane}
\label{sec:result}
In this section, open-loop and closed-loop simulations, with the objective of active vibration damping control, are carried out for the STC. In this case study, the active vibration damping problem is stated as an OCP, which is solved in terms of TMPC as online closed-loop control. For the sake of completeness, the trajectory planning of the proposed TMPC is benchmarked with the trajectory planning due to a NMPC. For simplicity the two modes $\phi_1, \phi_2$ are merged to form the deflection $\omega_t=\sum_{j=1}^2\psi_j(y)\phi_j(t)$ as a replacing state. In this way, state dimension is reduced to $x \in \RR^6$, where the input variables stay unchanged $u \in \RR^2$. Sampling time is considered as $T_s=\unit[0.03]{s}$. Number of vertices are $M=8$. Following parameter were selected with corresponding SI units $m_c=\unit[2.888]{}, m_t=\unit[2]{}, m_l=\unit[1.5]{}, L=\unit[1.9]{}, A=\unit[3.2e^{-4}]{}, \rho=\unit[2700]{}, EI=\unit[119.4]{}$. Nominal $(A_0, B_0)$ are the middle value of the 8 vertices, which are within their polytope and multiplicatively disturbed by a random signal $\unit[-0.1651]{} \leq \|\Delta_A\|_{\infty} \leq \unit[0.1174]{}$ and $\unit[-0.0026]{} \leq \|\Delta_B\|_{\infty} \leq \unit[0.0032]{}$. The additive uncertainty is also a random signal, with $\|w_k\|_{\infty}\leq \unit[0.1]{}$. The scheduling parameter $p$ varies between $m_l \in [\unit[0.04]{kg},\unit[1.5]{kg}]$ and $y_l \in [\unit[0]{m},\unit[1.9]{m}]$. 
For $\Z_{apr}$, $\{\alpha\}_1^8=[0.0476, 0.1]$ and $i=98$. For active vibration damping, the initial deviation of the tip is considered to be $\omega_t=\unit[0.5]{m}$. Additionally, the lift has to move from an initial position $y_l=\unit[1.9]{m}$ to $\unit[0]{m}$, while carriage moves from $x_c=\unit[1]{m}$ to $\unit[0]{m}$. Moreover, the STC is subjected to constraints, such as, on the carriage position with $\|x_c\|_{\infty}\leq \unit[1]{m}$, lift position $\|y_l\|_{\infty}\leq \unit[1]{m}$. All other states are constrained to $\|.\|_{\infty}\leq \unit[4]{(m)/(m/s)}$ and control input $\|F\|_{\infty}\leq \unit[50]{N}$. Both the carriage and the lift are actuated. The weighting matrices of the stage and terminal costs of the objective function are chosen as $Q=2.5 \cdot \I\in \mathbb{R}^{6 \times 6}$ and $R=\I \in \mathbb{R}^{2 \times 2}$, with $\I$ as an identity  matrix. Slack variables weighting matrix is selected as $10^5$. $P$ is chosen to be the solution of the discrete Riccati equation of the last $\Z_{apr}(j=M)$. The optimization horizon is equal to 15. Since uncertainty exists also at the initial state, this is considered as an additional optimization variable and thus tightened. To demonstrate that, initial states are selected, which lies outside the tightened set. The initial states are then tightened by $x_0 \in \{z_0\} \oplus \Z \cap \bar{\X}$ and $x_k=\{z_k\} \oplus \Z$ holds along the horizon. A Monte-Carlo simulation of 35 simulations has been conducted. 
%
%
%
%
%
%
%
%
\begin{figure}[htbp]
\begin{center}
\includegraphics[width=0.5\textwidth]{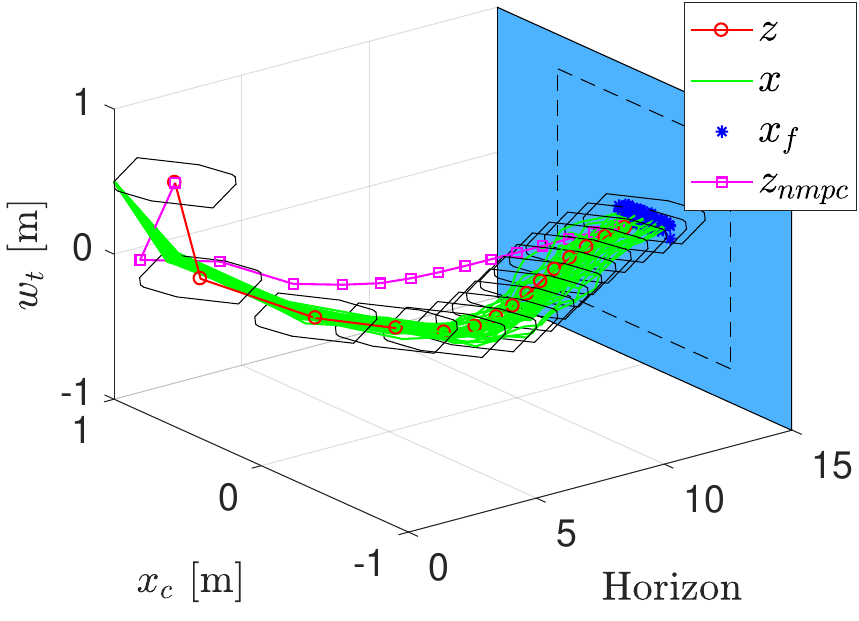}
\vspace{-3mm}
\caption{Projection of TMPC on $x_c$ and $\omega_t$ along the horizon} 
\label{fig:TMPC1}
\end{center}
\end{figure}
%
%
\begin{figure}[htbp]
\begin{center}
\includegraphics[width=0.5\textwidth]{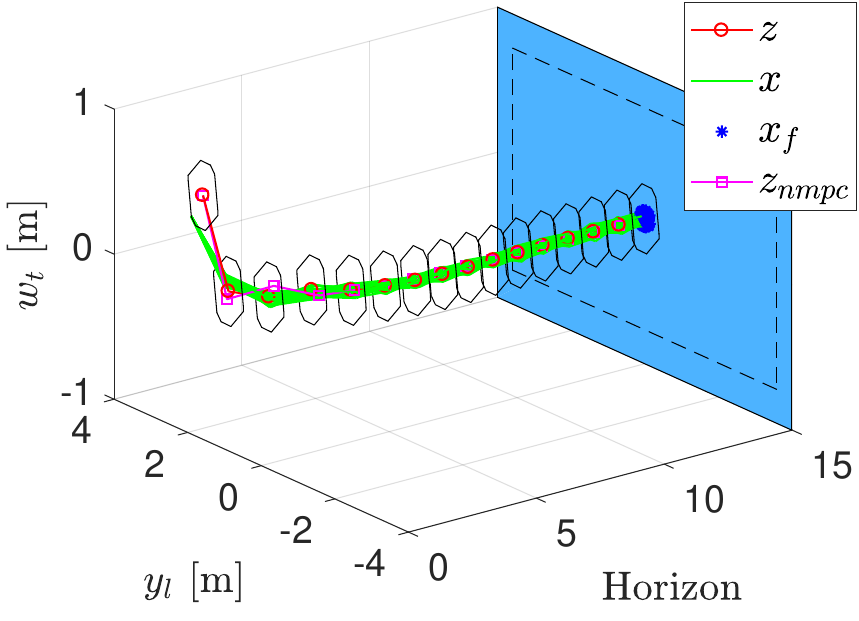}
\vspace{-3mm}
\caption{Projection of TMPC on $y_l$ and $\omega_t$ along the horizon} 
\label{fig:TMPC2}
\end{center}
\end{figure}
The simulation results of the TMPC are shown in Fig. \ref{fig:TMPC1}, \ref{fig:TMPC2}, \ref{fig:TMPC1_1} and \ref{fig:TMPC2_2}. Fig. \ref{fig:TMPC1}, Fig. \ref{fig:TMPC2} use the damping factor, known from Section \ref{subsec:stability}, as $\beta_d = 0.1$ and Fig. \ref{fig:TMPC1_1}, Fig. \ref{fig:TMPC2_2} use the damping factor $\beta_d = 0.01$ in order to simulate a behavior of the beam with high and beam with less stiffness. The six-dimensional TMPC is projected on $x_c$ and $\omega_t$ and on $y_l$ and $\omega_t$. It has been observed that, 2D plots masks sometimes underlying constraint violation, therefore 3D plot are adapted here. The nominal trajectory is given in red, as $z$. Around it, the disturbance invariant tube, consists of a set sequence of $\Z$ is shown. It can be stated, that inside this tube and despite disturbance for every actual state $x_k \in \{z_k\} \oplus \Z$ there exists an input such that $x_{k+1} \in \{z_{k+1}\} \oplus \Z$ and thus CPI. Constraints are not violated $(z \in \bar{\X}, x \in \X)$. Actual states beyond the nominal trajectories are given in asterisk blue, as $x_f$, which stays in the terminal set (Mode 2). Solid and dashed blue polytopes portrait the $\X$ and the tightened $\bar{\X}$ admissible sets. Despite the heavy tip mass $m_t$, the big lift mass $m_l$ and the position $y_l$ variations, this TMPC shows a robust performance. All 35 closed-loop simulations (actual state $x$) given in green stays within the tubes. The vertical motion given in Fig. \ref{fig:TMPC2} is less effected from the axial deviation as it is the case in Fig. \ref{fig:TMPC1}. 
Additionally, both nominal trajectories of NMPC and TMPC are compared. Especially in Fig. \ref{fig:TMPC1}, the nominal trajectory obtained from NMPC shown in magenta differs obviously from the nominal trajectory obtained from the TMPC on Fig. \ref{fig:TMPC2}. This is due to the associated nonlinearities, which became mores distinct, with the considered heavy masses. Also differences between NMPC and TMPC are obvious in the simulation of the beam with less stiffness on Fig. \ref{fig:TMPC1_1} and Fig. \ref{fig:TMPC2_2}. In this case, NMPC operates more accurate in terms of frequency prediction and thus more efficient for damping control, compared to the TMPC using the LPV representation. One of the key ideas of open-loop prediction, is the frequency prediction in order to operate away from resonance frequencies or maybe even away from other specific frequency. Due to the nonlinear model, a frequency prediction showed a high level of fidelity, at least for the first two resonance frequencies. For higher frequencies model mismatch is surely also higher. Therefore, it can be stated that, for precise frequency prediction and accurate active vibration damping control, NMPC is the better choice, however with remarkable computational effort. The proposed tube-based controller can be used to control a non-linear stacker cranes and only requires the on-line solution of a set of convex optimization problems. Obviously, this lower computational effort is traded at the cost of reduced achievable performance with respect to NMPC. Computational advantages of the convex optimization compared with nonlinear programming, required for NMPC, is commonly known and comparison studies are widely spread. Therefore a computational effort comparison is not a part of this paper.    
\begin{figure}[htbp]
\begin{center}
\includegraphics[width=0.48\textwidth]{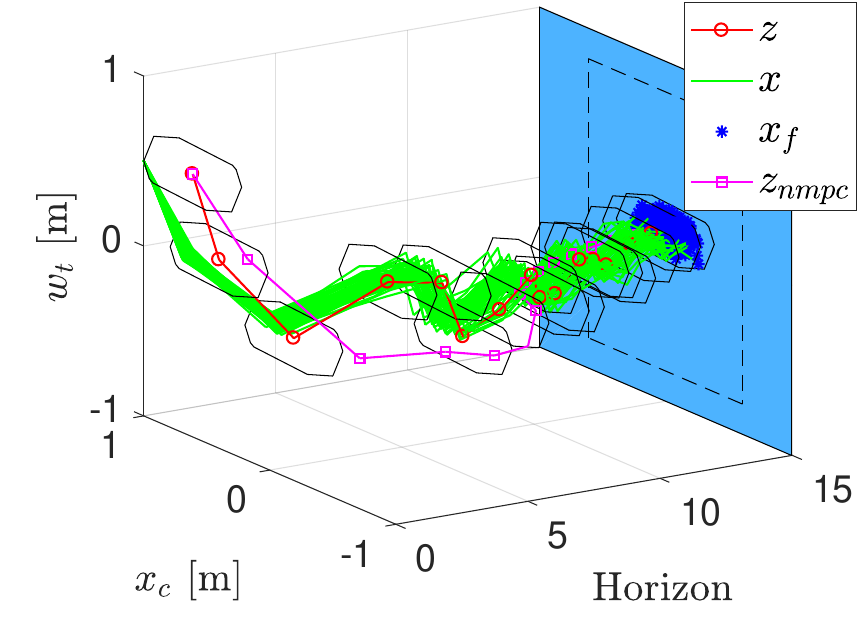}
\vspace{-3mm}
\caption{Projection of TMPC on $x_c$ and $\omega_t$ along the horizon} 
\label{fig:TMPC1_1}
\end{center}
\end{figure}
%
%
\begin{figure}[htbp]
\begin{center}
\includegraphics[width=0.48\textwidth]{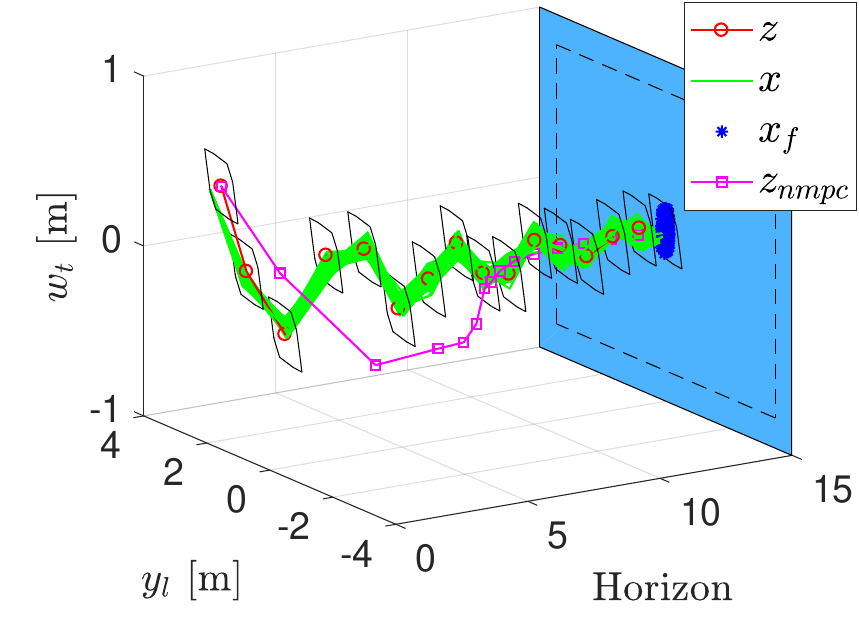}
\vspace{-3mm}
\caption{Projection of TMPC on $y_l$ and $\omega_t$ along the horizon} 
\label{fig:TMPC2_2}
\end{center}
\end{figure}
%

\section{CONCLUSION}
\label{sec:conc}
%
%
In this paper, a tube-based model predictive control (TMPC) scheme for an active vibration damping control of stacker cranes (STC) is proposed. It offers a less computational demanding solution for both trajectory planning and tracking control. Additionally, mismatches due to the formulation of a nonlinear STC model as a linear parameter varying system are captured by the TMPC. In addition, a simple tube parametrization is introduced. For nominal trajectory planning the frame of soft-constrained MPC was used to penalize any close operation beside the resonance frequencies. The proposed scheme establishes the closed-loop stability and recursive feasibility, which are performed on a numerical example of STC and benchmarked with a nonlinear MPC.
%
%
%
%
\bibliography{Chapter/LiteratureBib} 
\bibliographystyle{IEEEtran}



\end{document}